\newcommand{\io}{\text{ i.o.}}
\newcommand{\snk}{S_{n_k}}
\newcommand{\snl}{S_{n_l}}
\newcommand{\Var}{\mathrm{Var}}
\newcommand{\Cov}{\mathrm{Cov}}
\newcommand{\galton}{\genfrac{[}{]}{0pt}{}}
\newcommand{\ER}{ER}
\newcommand{\KS}{KS}
\newcommand{\D}{D}
\newcommand{\PWI}{PWI}
\newcommand{\SUB}{SUB}
\newcommand{\NOP}{NOP}
\newcommand{\IND}{IND}
\newcommand{\B}{B}
\newcommand{\IO}{IO}
\newtheorem{theorem}{Theorem}[section]
\newtheorem{lemma}[theorem]{Lemma}
\author{Csaba Bir\'o}
\email{csaba.biro@louisville.edu}
\author{Israel R.~Curbelo}
\email{israel.curbelo@louisville.edu}
\address{Department of Mathematics, University of Louisville, Louisville, KY 40292}
\title[Strong converses of the Borel--Cantelli Lemma]{Weak independence of events and the converse of the Borel--Cantelli Lemma}
\begin{document}

\begin{abstract}
The converse of the Borel-Cantelli Lemma states that if $\{A_i\}_{i=1}^\infty$ is a sequence of independent events such that $\sum P(A_i)=\infty$, then almost surely infinitely many of these events will occur. Erd\H os and R\'enyi proved that it is sufficient to weaken the condition of independence to pairwise independence. Later, several other weakenings of the condition appeared in the literature. The aim of this paper is to provide a collection of conditions, all of which imply that almost surely infinitely many of the events occur, and determine the complete implicational relationship between them.

Many of these results are known, or follow from known results, however, they are not widely known among non-specialists. Yet, the results can be extremely useful for areas outside of probability theory, as evidenced by the original motivation of this paper emerging from infinite combinatorics. Our proofs are aimed to be accessible to a general mathematical audience.
\end{abstract}

\maketitle

\section{Introduction}

Let $(\Omega,\mathcal{F},P)$ be a probability space, and let $\{A_n\}_{n=1}^\infty$ be a sequence of events in $\mathcal{F}$. We let
\[
\limsup_n A_n = \lim_{m\to\infty} \bigcup_{n=m}^{\infty} A_n=\{\omega\in\Omega:\omega \text{ is in infinitely many of the $A_i$'s}\},
\]
and
\[
\liminf_n A_n = \lim_{m\to\infty} \bigcap_{n=m}^{\infty} A_n=\{\omega\in\Omega:\omega \text{ is in all but finitely many of the $A_i$'s}\}.
\]
Note that these limits exist in arbitrary measure spaces, because the sets in question are monotone. It is common to write $\limsup A_n = \{\omega\in\Omega:\omega\in A_n\io\}$, and $P(\limsup A_n)=P(A_n\io)$ where i.o. stands for infinitely often.

The classical Borel--Cantelli Lemma states the following.

\begin{theorem}
If $\sum_{n=1}^{\infty} P(A_n)<\infty$ then $P(A_n \io)=0.$
\end{theorem}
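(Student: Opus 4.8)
The plan is to exploit the monotone structure of the sets already set up in the excerpt. Write $B_m = \bigcup_{n=m}^\infty A_n$, so that $\limsup_n A_n = \bigcap_{m=1}^\infty B_m$. Since the $B_m$ are decreasing, $P(\limsup_n A_n) = \lim_{m\to\infty} P(B_m)$, so it suffices to show $P(B_m)\to 0$.

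The key estimate is countable subadditivity of $P$: for each fixed $m$,
\[
P(B_m) = P\Bigl(\bigcup_{n=m}^\infty A_n\Bigr) \le \sum_{n=m}^\infty P(A_n).
\]
Now I would invoke the hypothesis $\sum_{n=1}^\infty P(A_n) < \infty$: the convergence of this series means its tails vanish, i.e. $\sum_{n=m}^\infty P(A_n) \to 0$ as $m\to\infty$. Combining, $0 \le P(\limsup_n A_n) \le \lim_{m\to\infty}\sum_{n=m}^\infty P(A_n) = 0$, which gives the conclusion.

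There is essentially no obstacle here; the only things being used are that $P$ is a countably subadditive, monotone set function on a $\sigma$-algebra and that a convergent nonnegative series has vanishing tails. If one wished to avoid even subadditivity, one could instead note $P(B_m) = \lim_{N\to\infty} P\bigl(\bigcup_{n=m}^N A_n\bigr)$ by continuity from below and bound each finite union by $\sum_{n=m}^N P(A_n)$, but subadditivity packages this cleanly. I would present the three-line chain of inequalities above as the whole proof.
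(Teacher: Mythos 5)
Your proof is correct: this is the classical (first) Borel--Cantelli Lemma, which the paper states without proof, and your argument via $\limsup_n A_n=\bigcap_m\bigcup_{n\ge m}A_n$, continuity from above, countable subadditivity, and the vanishing tails of a convergent series is exactly the standard proof one would supply here.
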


The converse is obviously not true without adding some conditions. The most well-known of these is independence of these events. The resulting theorem is what usually is called the Second Borel--Cantelli Lemma.

\begin{theorem}
If $\sum_{n=1}^{\infty} P(A_n)=\infty$ and the events $\{A_n\}$ are independent then $P(A_n \io)=1.$
\end{theorem}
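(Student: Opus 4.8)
The plan is to establish the complementary statement $P\bigl((\limsup_n A_n)^c\bigr)=0$. Writing
\[
\Bigl(\limsup_n A_n\Bigr)^c = \liminf_n A_n^c = \bigcup_{m=1}^\infty \bigcap_{n=m}^\infty A_n^c,
\]
countable subadditivity reduces the problem to showing that $P\bigl(\bigcap_{n=m}^\infty A_n^c\bigr)=0$ for each fixed $m$.

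First I would note that independence of $\{A_n\}$ implies independence of the family obtained by replacing any of the events by its complement; in particular $\{A_n^c\}_{n\ge m}$ is independent, so for every finite $N\ge m$,
\[
P\Bigl(\bigcap_{n=m}^N A_n^c\Bigr) = \prod_{n=m}^N \bigl(1-P(A_n)\bigr).
\]
The second step is the elementary bound $1-x\le e^{-x}$, valid for all real $x$ (and in the degenerate case $P(A_n)=1$ the product is already $0$, so nothing is lost), which gives
\[
P\Bigl(\bigcap_{n=m}^N A_n^c\Bigr) \le \exp\Bigl(-\sum_{n=m}^N P(A_n)\Bigr).
\]

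Since $\sum_{n=1}^\infty P(A_n)=\infty$, the tail $\sum_{n=m}^N P(A_n)\to\infty$ as $N\to\infty$, so the right-hand side tends to $0$. The sets $\bigcap_{n=m}^N A_n^c$ are decreasing in $N$ with intersection $\bigcap_{n=m}^\infty A_n^c$, so continuity of measure from above yields $P\bigl(\bigcap_{n=m}^\infty A_n^c\bigr)=0$. Summing over $m$ gives $P\bigl((\limsup_n A_n)^c\bigr)=0$, that is, $P(A_n\io)=1$.

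I do not expect a genuine obstacle here: the argument is short and the only points needing a line of justification are that independence of events passes to independence of complements (standard, e.g.\ via inclusion--exclusion or a $\pi$--$\lambda$ argument) and the convergence of the infinite product, which the exponential estimate settles at once. The slightly delicate bookkeeping is simply making sure the divergence of the full series is correctly transferred to every tail.
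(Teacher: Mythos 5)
Your proof is correct: the decomposition $(\limsup_n A_n)^c=\bigcup_m\bigcap_{n\ge m}A_n^c$, the product formula for independent complements, the bound $1-x\le e^{-x}$, and continuity from above all work exactly as you describe, and the transfer of divergence to every tail is handled properly. However, this is the classical direct argument, which is not how the paper arrives at the result. The paper quotes this theorem as the classical Second Borel--Cantelli Lemma and, within its own framework, obtains it (and far more) as the top of a chain of weaker hypotheses: independence implies pairwise independence, hence nonpositive correlation (\NOP), and the real work is the second-moment machinery showing $\text{\NOP}\implies\text{\ER}$ and $\text{\NOP}\implies\text{\D}$ via Chebyshev-type variance bounds on $S_n/\mu_n$, followed by $\text{\ER}\implies\text{\SUB}\implies\text{\IO}$ (or, alternatively, the route through Bruss's condition \B, shown equivalent to \IO). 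The trade-off is instructive: your argument is shorter and entirely elementary, but it leans on full mutual independence through the product $\prod(1-P(A_n))$ over arbitrarily long blocks, a step that has no analogue under pairwise independence or mere nonpositive correlation; the paper's route pays with Chebyshev/variance estimates and subsequence extraction, but in exchange the hypothesis is weakened drastically (only pairwise covariance conditions, and only ``eventually''), which is precisely the point of the paper. So your proposal proves the stated theorem cleanly, but it would not generalize to the weaker conditions the paper is organized around.
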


The condition of independence is very strong, so effort has been made to weaken it or replace it entirely. In 1959 Erd\H os and R\'enyi \cite{Erd-Ren-59} showed that instead of independence, one can assume pairwise independence. In fact they proved an even stronger theorem.

\begin{theorem}
Let $A_n$ be a sequence of events such that $\sum_{n=1}^{\infty} P(A_n)=\infty$ and
\begin{equation}\label{eq:ercondition}
    \liminf_{n\to\infty}\frac{\sum_{k=1}^n\sum_{l=1}^n P(A_k A_l)}{(\sum_{k=1}^n P(A_k))^2}=1.
\end{equation}
then $P(A_n \io)=1.$
\end{theorem}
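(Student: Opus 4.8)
The plan is to run the second moment method on the counting variables $S_n=\sum_{k=1}^n\mathbb{1}_{A_k}$. Writing $m_n=E[S_n]=\sum_{k=1}^n P(A_k)$, which diverges by hypothesis, and $E[S_n^2]=\sum_{k=1}^n\sum_{l=1}^n P(A_kA_l)$, condition \eqref{eq:ercondition} is precisely the statement that $\liminf_n E[S_n^2]/m_n^2=1$. The elementary bound
\[
P(S_n>0)\ge \frac{(E S_n)^2}{E[S_n^2]},
\]
which follows from $E S_n=E[S_n\mathbb{1}_{\{S_n>0\}}]\le \sqrt{E[S_n^2]}\,\sqrt{P(S_n>0)}$ by Cauchy--Schwarz (note $S_n\ge 0$), then gives $P(S_n>0)\to 1$ along a subsequence $n_k\to\infty$ that realizes the $\liminf$.

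The point where care is needed is that $\{S_n>0\}$ only says that some $A_k$ with $k\le n$ occurs, not that infinitely many do, so the estimate must be applied to the tails. Fix $M$ and set $S_n^{(M)}=\sum_{k=M}^n\mathbb{1}_{A_k}$. Its mean is $m_n-c_M$ with $c_M=\sum_{k=1}^{M-1}P(A_k)$ a constant, and its second moment equals $E[S_n^2]$ minus the sum of the terms $P(A_kA_l)$ having $\min(k,l)<M$; since $P(A_kA_l)\le\min(P(A_k),P(A_l))$, the discarded sum is at most $2(M-1)m_n$. Hence, using $E[X^2]\ge(EX)^2$,
\[
1\le \frac{E\big[(S_n^{(M)})^2\big]}{(E S_n^{(M)})^2}\le \frac{E[S_n^2]}{(m_n-c_M)^2}=\frac{E[S_n^2]}{m_n^2}\cdot\frac{m_n^2}{(m_n-c_M)^2},
\]
and along the subsequence $n_k$ the right-hand side tends to $1$ because $m_{n_k}\to\infty$. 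Thus $\liminf_n E[(S_n^{(M)})^2]/(E S_n^{(M)})^2=1$ for every fixed $M$, and the Cauchy--Schwarz bound above yields $P(S_{n_k}^{(M)}>0)\to 1$.

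Now $\{S_n^{(M)}>0\}=\bigcup_{k=M}^n A_k$ increases with $n$ to $\bigcup_{k=M}^\infty A_k$, so monotonicity upgrades the subsequential limit to $P\big(\bigcup_{k=M}^\infty A_k\big)=1$ for every $M$. Finally, since $\limsup_n A_n=\bigcap_{M}\bigcup_{k\ge M}A_k$ is a decreasing intersection of events each of full measure, continuity from above gives $P(A_n\io)=1$, which is the claim.

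The main obstacle is exactly the maneuver in the second paragraph: the naive second moment bound for $S_n$ is useless by itself, and one must verify that condition \eqref{eq:ercondition} is inherited, in the $\liminf$ sense, by every tail. This works only because $m_n\to\infty$ forces the correction factor $m_n^2/(m_n-c_M)^2$ to tend to $1$ — so the divergence hypothesis $\sum P(A_n)=\infty$ is used a second, essential time beyond just making $m_n$ grow.
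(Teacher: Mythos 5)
Your proof is correct, but it takes a genuinely different route from the paper. The paper factors the theorem through its condition SUB: from $\liminf_n E[X_n^2]=1$ it extracts a subsequence with $\Var(X_{n_k})\to 0$ and applies Chebyshev to get $X_{n_k}\xrightarrow{p}1$ (ER $\Rightarrow$ SUB); then, in the step SUB $\Rightarrow$ IO, it passes to a further subsequence $n_l$ with $P(|X_{n_l}-1|>\tfrac12)<2^{-l}$ and invokes the forward Borel--Cantelli Lemma to conclude $S_{n_l}\geq E[S_{n_l}]/2$ eventually a.s., hence $S_{n_l}\to\infty$ a.s., which settles the ``infinitely often'' issue without ever truncating the sums. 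You instead run the classical Chung--Erd\H{o}s/Paley--Zygmund second-moment method: the Cauchy--Schwarz bound $P(S_n>0)\geq (ES_n)^2/E[S_n^2]$ applied to the tail sums $S_n^{(M)}$, together with the observation that condition \eqref{eq:ercondition} is inherited by every tail because $m_n\to\infty$, and then monotone convergence of $\bigcup_{k=M}^n A_k$ plus continuity from above. Your handling of the tails is the step the paper's route makes unnecessary (its a.s.\ divergence of $S$ along a subsequence already gives infinitely many occurrences), while your argument buys a quantitative dividend the paper's does not: it shows $P\bigl(\bigcup_{k\geq M}A_k\bigr)\geq \limsup_n (ES_n)^2/E[S_n^2]$ for every $M$, i.e.\ a Kochen--Stone-type lower bound on $P(A_n\io)$, and it avoids any appeal to the forward Borel--Cantelli Lemma. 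Both arguments are complete; the paper's has the structural advantage of isolating SUB as an intermediate weak-independence condition, which is the organizing theme of the paper.
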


In their paper they point out that nonpositive correlation implies their condition. (Pairwise independence of the events is a special case of nonpositive correlation.)

This result was rediscovered or improved several times by different authors: see e.g.\ Lamperti~\cite{Lam-63}, Kochen and Stone~\cite{Koc-Sto-64}, and Spitzer~\cite{Spi-PORW}. The latter two also found lower bounds for $P(A_n\io)$ in terms of the left hand side of (\ref{eq:ercondition}). M\'ori and Sz\'ekely~\cite{Mor-Sze-83} improved on these bounds. Some of our conditions were inspired by these papers.

The research in this area continues. Frolov~\cite{Fro-12} made improvements on lower bounds on $P(A_n\io)$, thereby generalizing earlier theorems. A monograph by Chandra \cite{Cha-TBCL} provides a good overview of the subject.

In \cite{Dur-PTAE} the following theorem is proven.

\begin{theorem}
Let $A_n$ be a pairwise independent sequence of events such that $\sum_{n=1}^{\infty} P(A_n)=\infty$. Let $I_n=\mathbb{1}_{A_n}$ be the indicator variable of $A_n$. Then, as $n\to\infty$
\begin{equation}\label{eq:durrett}
    \frac{\sum_{i=1}^n I_i}{\sum_{i=1}^n P(A_i)}\to 1 \quad\text{a.s.}
\end{equation}
\end{theorem}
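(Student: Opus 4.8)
The plan is to read (\ref{eq:durrett}) as a strong law of large numbers for the partial sums $\snk$-type quantities, and to prove it by the classical route: Chebyshev's inequality for convergence in probability, followed by a subsequence argument to upgrade to almost sure convergence. Throughout write $S_n=\sum_{i=1}^n I_i$ and $a_n=E S_n=\sum_{i=1}^n P(A_i)$, so that $a_n\uparrow\infty$ is precisely the hypothesis $\sum P(A_i)=\infty$, and the goal is $S_n/a_n\to 1$ a.s.

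First I would record the second-moment estimate. Since the $I_i$ are pairwise independent, $\Cov(I_i,I_j)=0$ for $i\ne j$, hence $\Var(S_n)=\sum_{i=1}^n\Var(I_i)=\sum_{i=1}^n P(A_i)(1-P(A_i))\le a_n$. Chebyshev's inequality then gives, for every $\varepsilon>0$,
\[
P\bigl(|S_n-a_n|>\varepsilon a_n\bigr)\le\frac{\Var(S_n)}{\varepsilon^2 a_n^2}\le\frac{1}{\varepsilon^2 a_n}\longrightarrow 0,
\]
so $S_n/a_n\to 1$ in probability. This is the easy part; the work is in passing from in-probability to almost-sure convergence.

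Next I would pass to a subsequence along which the error probabilities are summable. Set $n_k=\min\{n:a_n\ge k^2\}$, which is finite because $a_n\uparrow\infty$; since $a_{n_k}=a_{n_k-1}+P(A_{n_k})<k^2+1$ we get the two-sided bound $k^2\le a_{n_k}<k^2+1$. Consequently $\sum_k P(|\snk-a_{n_k}|>\varepsilon a_{n_k})\le\sum_k 1/(\varepsilon^2 a_{n_k})\le\sum_k 1/(\varepsilon^2 k^2)<\infty$, so by the (easy direction of the) Borel--Cantelli Lemma, $\snk/a_{n_k}\to 1$ almost surely; intersecting the relevant almost sure events over $\varepsilon=1/m$, $m\in\mathbb N$, produces a single null set off which $\snk/a_{n_k}\to 1$. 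Finally I would interpolate using monotonicity of $n\mapsto S_n$: for $n_k\le n\le n_{k+1}$ one has $\snk\le S_n\le S_{n_{k+1}}$ and $a_{n_k}\le a_n\le a_{n_{k+1}}$, so
\[
\frac{\snk}{a_{n_{k+1}}}\le\frac{S_n}{a_n}\le\frac{S_{n_{k+1}}}{a_{n_k}},
\]
and since $k^2\le a_{n_k}<k^2+1$ forces $a_{n_{k+1}}/a_{n_k}\to 1$, both bounds tend to $1$ almost surely, giving $S_n/a_n\to 1$ a.s.

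The main obstacle is exactly the upgrade from convergence in probability to almost sure convergence: the Borel--Cantelli estimate only controls the sequence along $(n_k)$, and one must choose $(n_k)$ sparse enough for summability yet dense enough (here $a_{n_k}\sim k^2$) that the ratios $a_{n_{k+1}}/a_{n_k}$ still converge to $1$, so that the monotone interpolation between consecutive $n_k$ loses nothing in the limit. The one subtlety worth stating carefully is the order of quantifiers when combining the subsequence result over all $\varepsilon>0$ into a single almost sure statement; everything else reduces to the elementary estimates above.
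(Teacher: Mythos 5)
Your proof is correct and follows essentially the same route as the paper's argument (given there under the weaker nonpositive-correlation hypothesis): the Chebyshev bound $\Var(S_n)\le\mu_n$, the subsequence $n_k=\min\{n:\mu_n\ge k^2\}$ with $k^2\le\mu_{n_k}\le k^2+1$, Borel--Cantelli along the subsequence, and monotone interpolation using $\mu_{n_{k+1}}/\mu_{n_k}\to1$. Your explicit remark about intersecting the null sets over $\epsilon=1/m$ is a detail the paper leaves implicit, but there is no substantive difference.
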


Obviously (\ref{eq:durrett}) implies $P(A_n \io)=1.$ We later prove that an even weaker condition implies $P(A_n \io)=1.$

The original motivation of this paper came from studying infinite random graphs \cite{Bir-Dar-18}. In this paper, a result by Bruss \cite{Bru-80} is used. This theorem is usually called the/a counterpart of the Borel--Cantelli Lemma.

\begin{theorem}
Let $A_n$ be a sequence of events such that $A_k\subseteq A_{k+1}$, and
\[
\sum_{n=1}^\infty P(A_{n+1}|\overline{A_n})=\infty.
\]
where $\overline{A}$ denotes the complement of $A$. Then $P(A_n \io)=1$.
\end{theorem}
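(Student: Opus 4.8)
The plan is to use the monotonicity of the sequence to collapse the conditional probabilities into a telescoping product, reducing the statement to a classical analytic fact. First, since $A_k\subseteq A_{k+1}$, the union $\bigcup_{n=m}^\infty A_n$ is independent of $m$, so $\limsup_n A_n=\bigcup_{n=1}^\infty A_n$; by continuity of measure from below this gives $P(A_n\io)=\lim_{n\to\infty}P(A_n)$. Hence it is enough to show $\lim_{n\to\infty}P(A_n)=1$, equivalently $\lim_{n\to\infty}P(\overline{A_n})=0$ --- a limit that exists because $P(\overline{A_n})$ is nonincreasing and nonnegative.

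Next I would write $q_n=P(\overline{A_n})$ and dispose of the degenerate case: if $q_m=0$ for some $m$ then $P(\bigcup_n A_n)\ge P(A_m)=1$ and we are done (with the hypothesis read under the usual conventions for conditioning on null sets), so assume $q_n>0$ for all $n$. The key computation is that, because $A_n\subseteq A_{n+1}$, we have $A_{n+1}\cap\overline{A_n}=A_{n+1}\setminus A_n$, and therefore
\[
P(A_{n+1}\mid\overline{A_n})=\frac{P(A_{n+1})-P(A_n)}{P(\overline{A_n})}=\frac{q_n-q_{n+1}}{q_n}=1-\frac{q_{n+1}}{q_n}.
\]
Thus the hypothesis $\sum_n P(A_{n+1}\mid\overline{A_n})=\infty$ says precisely that $\sum_n a_n=\infty$, where $a_n:=1-q_{n+1}/q_n\in[0,1)$.

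Finally I would telescope: $q_N/q_1=\prod_{n=1}^{N-1}(q_{n+1}/q_n)=\prod_{n=1}^{N-1}(1-a_n)\le\exp\bigl(-\sum_{n=1}^{N-1}a_n\bigr)$, using $1-x\le e^{-x}$. Since $\sum a_n=\infty$ the right-hand side tends to $0$, so $q_N\to 0$, i.e.\ $P(A_n)\to 1$, which by the first paragraph yields $P(A_n\io)=1$.

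I do not expect a genuine obstacle here. The whole content is the observation that monotonicity turns each conditional probability into the ratio $1-q_{n+1}/q_n$; after that one is left with the classical statement that a product $\prod(1-a_n)$ with $a_n\in[0,1)$ vanishes exactly when $\sum a_n$ diverges. The only points needing a moment's care are the reduction $P(A_n\io)=\lim P(A_n)$ and the interpretation of the hypothesis in the degenerate case $P(\overline{A_n})=0$.
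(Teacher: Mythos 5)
Your proposal is correct and takes essentially the same route as the paper's own argument (Lemma~3.1, the adapted Bruss proof): you identify $P(A_{n+1}\mid\overline{A_n})$ with $1-P(\overline{A_{n+1}})/P(\overline{A_n})$, telescope so that $P(\overline{A_N})$ becomes a product of the factors $1-a_n$, and conclude it vanishes because $\sum a_n$ diverges, with the same reduction $P(A_n\io)=\lim P(A_n)$ and the same handling of the degenerate case $P(A_n)=1$.
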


In this theorem, it is usually assumed that $P(A_n)\neq 1$, so the conditional probabilities are well defined. However, to make the theorem more elegant and widely applicable, we will allow $P(A_n)=1$, and we define $P(A_{n+1}|\overline{A_n})=1$ for that case. We will use this convention for the balance of the paper.

Despite having a simple proof, our results show that one can use Bruss's Theorem to prove some very general versions of the converse of the Borel--Cantelli Lemma.

\subsection{The structure of this paper}

In Section~\ref{sec:statements} we state numerous conditions on the sequence of events $\{A_n\}$. All of these imply that almost surely infinitely many of the events $A_n$ occur, which is also the last of the conditions. These conditions can be regarded as various weakenings of the condition that the events $A_n$ are independent; these are what we referenced in the title as ``weak independence''. Most of these conditions appear in some form in various papers cited above, although we changed some of them slightly (typically making them slightly weaker, so the theorems are slightly stronger).

In this section we state our results in a single diagram (Figure~\ref{fig:diag}). We completely determined the implicational relationship of the conditions. The rest of the paper is dedicated the proof of every implication and non-implication.

Though many of these proofs are known, we decided that for completeness' sake we include them all. In some cases we slightly generalized earlier results; in other cases we simplified proofs or proved known results in different, more direct ways.

But perhaps the most interesting part of the paper is the part on the negative results. In particular, the longest and most complicated result is the proof that $\text{\D}\centernot\implies\text{\ER}$. This proof introduces a tool we called ``Galton sequences'', which can be used to construct examples for sequences of events satisfying various conditions.

Although many ideas and proofs in the paper are original, our main contribution to the subject is expository in nature.

\section{Conditions, statements of results}\label{sec:statements}

In the remainder of the paper, we will deal with an infinite sequence of events $A_1,A_2,\ldots$ of some probability space $(\Omega,\mathcal{F},P)$. Since we are interested in the converse statements of the Borel--Cantelli Lemma, we will always assume $\sum_{n=1}^\infty P(A_n)=\infty$. We will use $I_n=\mathbb{1}_{A_n}$ for the indicator variable of the event $A_n$: that is $I_n=1$ if $A_n$ occurs, $I_n=0$ if it does not. We will use $S_n=\sum_{i=1}^n I_i$ for the number of events that occurred in the first $n$ events, and $\mu_n=E[S_n]$ for the expected number of occurred events. We will use $X_n=S_n/\mu_n$ if $\mu_n\neq 0$, otherwise we set $X_n=1$. Notice that $E[X_n]=1$. Finally, we use $E_m^n=\cup_{i=m}^n A_i$.

As customary, we use $\xrightarrow{d}$, $\xrightarrow{p}$, and $\xrightarrow{a.s.}$ for convergence of random variables in distribution, probability, and almost surely, respectively.

We also use the term ``eventually'' to express that that there exits a positive integer $N$ such that the condition is satisfied whenever all indices in the condition are at least $N$.

\bigskip

\begin{center}
\begin{tabular}{cc}
\textbf{\IND} & $A_n$ are independent (eventually)\\
\textbf{\PWI} & $A_n$ are pairwise independent (eventually)\\
\textbf{\NOP} & for all $i,j$, $\Cov(I_i,I_j)\leq 0$ (eventually)\\
\textbf{\ER} & $\displaystyle\liminf_{n\to\infty} E[X_n^2]=1$\\
\textbf{\KS} & $\displaystyle\limsup_{n\to\infty} E[S_n]^2/E[S_n^2]=1$\\
\textbf{\D} & $X_n\xrightarrow{a.s.} 1$\\
\textbf{\SUB} & $\exists$ subsequence $X_{n_k}$ of $X_n$ such that $X_{n_k}\xrightarrow{p}1$ (equivalently, $X_{n_k}\xrightarrow{d}1$)\\
\textbf{\B} & for all $m$, $\displaystyle\sum_{i=m}^\infty P(E_{m}^{i+1}|\overline{E_m^i})=\infty$\\
\textbf{\IO} & almost surely $A_i$ occurs infinitely often\\
\end{tabular}
\end{center}

We completely determined the quasiorder of implications of these conditions. Figure~\ref{fig:diag} contains all the information. A single arrow is implication, and a box contains equivalent conditions.

\begin{figure}[H]
\psfrag{er}{\ER}
\psfrag{ks}{\KS}
\psfrag{d}{\D}
\psfrag{pwi}{\PWI}
\psfrag{sub}{\SUB}
\psfrag{nop}{\NOP}
\psfrag{ind}{\IND}
\psfrag{b}{\B}
\psfrag{io}{\IO}
\includegraphics[scale=0.5]{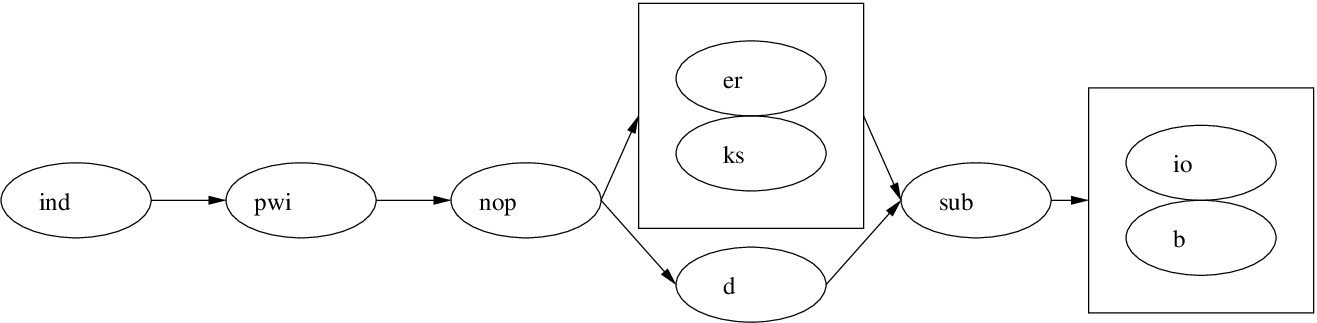}
\caption{Diagram of implications}\label{fig:diag}
\end{figure}

\section{Proofs and examples}

This section will provide all proofs and counterexamples that justifies the diagram in Figure~\ref{fig:diag}.

Note that none of the conditions are influenced by adding or removing finitely many events to the sequence $\{A_i\}$. Suppose $\mu_n=0$ for some $n$. The condition $\mu_n\to\infty$ and that $\mu_n$ is monotone increasing imply that this can only happen for the first finitely many values of $n$. In this case, we can remove those first events from the beginning of the sequence without changing the truth value of any of the conditions. So in the proofs, we may assume that $\mu_n>0$, and $X_n=S_n/\mu_n$ for all $n>0$.

\subsection{Equivalences}

\subsubsection{$\text{\ER}\iff\text{\KS}$}\ 

Note that $E[X_n^2]=E[S_n^2/\mu_n^2]=\frac{E[S_n^2]}{E[S_n]^2}$, hence
\[
\liminf_{n\to\infty}E[X_n^2]=\limsup_{n\to\infty}\frac{1}{E[X_n^2]}=\limsup_{n\to\infty}\frac{E[S_n]^2}{E[S_n^2]}.
\]

\subsubsection{$\text{\IO}\iff\text{\B}$}\ 

The main idea here is to recognize that we can ``translate'' between general events $A_i$, and increasing sets of events $E_m^i$, making it possible to use Bruss's Theorem. Note that for the events $E_m^i$, one occurs if and only if infinitely many occurs, and this ``all or nothing'' phenomenon appears in this section in an obvious way.

The following lemma is essentially an adopted version of Bruss's proof.

\begin{lemma}\label{lem:bruss}
Let $m$ be a positive integer. Then $P(E_m^i\io)=1$ if and only if $\sum P(E_m^{i+1}|\overline{E_m^i})=\infty$.
\end{lemma}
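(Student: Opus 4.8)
The plan is to reduce Lemma~\ref{lem:bruss} to Bruss's Theorem (the counterpart of the Borel--Cantelli Lemma stated in the introduction) by applying it to the increasing sequence $F_i := E_m^i = \bigcup_{j=m}^i A_j$, for $i \geq m$. Since $F_i \subseteq F_{i+1}$ by construction, Bruss's Theorem applies verbatim and gives one direction immediately: if $\sum_{i=m}^\infty P(E_m^{i+1}\mid\overline{E_m^i}) = \infty$, then $P(F_i \io) = 1$. Moreover, because the $F_i$ are increasing, $F_i \io$ is the same event as $\bigcup_i F_i = \bigcup_{j=m}^\infty A_j = E_m^\infty$, so $P(F_i \io) = 1$ is literally the statement $P(E_m^i \io) = 1$. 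That settles the ``if'' direction.

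For the converse I would argue the contrapositive: suppose $\sum_{i=m}^\infty P(E_m^{i+1}\mid\overline{E_m^i}) < \infty$; I want to conclude $P(E_m^i \io) < 1$, i.e. $P(\bigcup_{j \geq m} A_j) < 1$. The standard trick (this is the other half of Bruss's argument, which the paper says this lemma adapts) is to estimate $P(\overline{F_i})$ from below via a telescoping product. Write $P(\overline{F_{i+1}}) = P(\overline{F_i}) - P(F_{i+1} \setminus F_i) = P(\overline{F_i})\bigl(1 - P(E_m^{i+1}\mid\overline{E_m^i})\bigr)$, using that $F_{i+1}\setminus F_i \subseteq \overline{F_i}$ so that $P(F_{i+1}\setminus F_i) = P(F_{i+1}\setminus F_i \mid \overline{F_i})\,P(\overline{F_i}) = P(E_m^{i+1}\mid\overline{E_m^i})\,P(\overline{F_i})$. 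Iterating from $i = m$ gives
\[
P(\overline{F_n}) = P(\overline{F_m})\prod_{i=m}^{n-1}\bigl(1 - P(E_m^{i+1}\mid\overline{E_m^i})\bigr).
\]
Assuming (as we may, per the remark preceding this subsection, after discarding finitely many events) that no $P(E_m^i) = 1$ so that $P(\overline{F_m}) > 0$ and all factors lie in $(0,1]$, the convergence of $\sum P(E_m^{i+1}\mid\overline{E_m^i})$ forces the infinite product $\prod_{i=m}^\infty (1 - P(E_m^{i+1}\mid\overline{E_m^i}))$ to be strictly positive. Hence $P(\overline{E_m^\infty}) = \lim_n P(\overline{F_n}) > 0$, so $P(E_m^i \io) < 1$, as desired. (One should also dispatch the degenerate case where $P(E_m^i) = 1$ for some $i$: then by the convention $P(E_m^{i+1}\mid\overline{E_m^i}) = 1$ for all larger indices too, the sum diverges, and $P(E_m^i \io) = 1$ trivially — so the biconditional holds in that case as well, consistent with treating this case separately at the outset.)

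The main obstacle, such as it is, is bookkeeping rather than depth: I need to be careful about the boundary case $P(E_m^i) = 1$ and the definitional convention for the conditional probability there, and I need to justify that the telescoping identity is valid (which just needs $P(\overline{F_i}) > 0$ along the way, or else the sum already diverges). The conceptual content — that for the increasing sequence $E_m^i$ the event ``$\io$'' collapses to ``at least one occurs,'' so Bruss's Theorem and its converse-style estimate apply directly — is exactly the ``all or nothing'' remark flagged before the lemma, and the only real work is writing the product estimate cleanly.
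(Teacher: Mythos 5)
Your proposal is correct and is essentially the paper's argument: the telescoping product identity $P(\overline{E_m^n})=P(\overline{E_m^m})\prod_{i=m}^{n-1}\bigl(1-P(E_m^{i+1}\mid\overline{E_m^i})\bigr)$, together with the observation that for the increasing sets $E_m^i$ the event ``infinitely often'' collapses to $\bigcup_{i\ge m}E_m^i$, is exactly how the paper handles both directions at once, so your separate appeal to Bruss's Theorem for the ``if'' direction is redundant rather than a genuinely different route. One minor blemish: the assumption $P(E_m^i)<1$ cannot be justified by ``discarding finitely many events'' (once some $P(E_m^i)=1$ it stays $1$ for all larger $i$), but your closing parenthetical dispatches that degenerate case correctly via the stated convention, matching the paper's first step, so nothing is missing.
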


\begin{proof}
If there exists an $i$ such that $P(E_m^i)=1$, then clearly both conditions of the lemma hold (recall our convention on conditioning on events of probability zero). So from now, we assume that $P(E_m^i)<1$ for all $i$.

Let $q_m=\lim_{i\to\infty}P(\overline{E_m^{i}})$, the probability that none of $E_m^i$ occurs. Note that $P(E_m^i\io)=1$ means exactly $q_m=0$.
\begin{multline*}
P(\overline{E_m^i})=
P(\overline{E_m^i}|\overline{E_m^{i-1}})
P(\overline{E_m^{i-1}}|\overline{E_m^{i-2}})
\cdots
P(\overline{E_m^{m+1}}|\overline{E_m^m})
P(\overline{E_m^m})=\\
P(\overline{E_m^m})\prod_{j=m}^{i-1}P(\overline{E_m^{j+1}}|\overline{E_m^j})
\end{multline*}
Now let $i\to\infty$, to get
\[
q_m=
P(\overline{E_m^m})\prod_{j=m}^\infty P(\overline{E_m^{j+1}}|\overline{E_m^j})=
P(\overline{E_m^m})\prod_{j=m}^\infty (1-P(E_m^{j+1}|\overline{E_m^j})).
\]
By our assumption $P(\overline{E_m^m})>0$, so $q_m=0$ if and only if $\prod (1-P(E_m^{j+1}|\overline{E_m^j}))=0$, which is equivalent to $\sum P(E_m^{j+1}|\overline{E_m^j})=\infty$.
\end{proof}

Now notice that $\{E_m^i\io\}=\bigcup_{i=m}^\infty A_i$ and by definition $\{A_i\io\}=\bigcap_{m=1}^\infty \bigcup_{i=m}^\infty A_i.$ Thus the result follows immediately from Lemma~\ref{lem:bruss} and the fact that 
\[
P\left(\bigcup_{i=m}^\infty A_i\right)=1 \text{ for all $m$} \iff P\left(\bigcap_{m=1}^\infty \bigcup_{i=m}^\infty A_i\right)=1.
\]

\subsection{Positive results}

\subsubsection{$\text{\IND}\implies\text{\PWI}$} Trivial.

\subsubsection{$\text{\PWI}\implies\text{\NOP}$} Trivial.

\subsubsection{$\text{\NOP}\implies\text{\ER}$}\label{sec:noptoer}\

First we will show that if $\Cov(I_i,I_j)\leq 0$ for all $i,j\geq 1$, then in fact 
\[
\lim_{n\to\infty}E[X_n^2]=1.
\]
The case, when the covariance condition is false for some pairs of events, will be dealt later. Notice, however, that the conclusion is stronger than required.

With the now stronger condition, we can now state simple upper bounds for the variance of $S_n$ and $X_n$. Since we will need this in a later proof as well, we state this as a lemma.
\begin{lemma}\label{lem:xnupper} Suppose $\Cov(I_i,I_j)\leq 0$ for all $i,j\geq 1$. Then
\[
\Var(S_n)\leq \mu_n;\qquad\Var(X_n)\leq\frac{1}{\mu_n}.
\]
\end{lemma}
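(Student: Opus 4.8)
The plan is to compute the variance of $S_n$ directly from its definition as a sum of indicators and use the hypothesis that all covariances are nonpositive. Since $S_n = \sum_{i=1}^n I_i$, bilinearity of covariance gives
\[
\Var(S_n) = \sum_{i=1}^n \Var(I_i) + \sum_{i\neq j} \Cov(I_i,I_j).
\]
The second sum is $\leq 0$ by the (strengthened) \NOP\ hypothesis currently in force, so $\Var(S_n) \leq \sum_{i=1}^n \Var(I_i)$. Now I would use that $I_i$ is a Bernoulli variable: $\Var(I_i) = P(A_i)(1-P(A_i)) \leq P(A_i)$. Summing gives $\Var(S_n) \leq \sum_{i=1}^n P(A_i) = \mu_n$, which is the first inequality.

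For the second inequality, recall $X_n = S_n/\mu_n$ (we are in the reduced situation where $\mu_n > 0$). Then $\Var(X_n) = \Var(S_n)/\mu_n^2 \leq \mu_n/\mu_n^2 = 1/\mu_n$, using the bound just established. This completes the proof.

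I do not expect any real obstacle here: the only point requiring a word of care is that the lemma is being applied under the temporarily strengthened hypothesis (covariances nonpositive for \emph{all} pairs $i,j$, not merely eventually), which is exactly the setting established just before the lemma is stated, so the cross-term sum is genuinely $\leq 0$ with no exceptional pairs to estimate separately. The Bernoulli bound $\Var(I_i) \leq P(A_i)$ is immediate from $\Var(I_i) = E[I_i^2] - E[I_i]^2 = E[I_i] - E[I_i]^2 = P(A_i) - P(A_i)^2$.
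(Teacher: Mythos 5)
Your proof is correct and follows essentially the same route as the paper: expand $\Var(S_n)$ by bilinearity, discard the nonpositive covariance terms, bound each $\Var(I_i)$ by $P(A_i)$, and then divide by $\mu_n^2$ for the statement about $X_n$. Nothing to add.
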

\begin{proof}
\begin{multline*}
\Var(S_n)=
\Var(\sum_{i=1}^n I_i)=
\sum_{i=1}^n\Var(I_i)+2\underset{1\leq i<j\leq n}{\sum\sum} \Cov(I_i,I_j)\\
\leq\sum_{i=1}^n\Var(I_i)
=\sum_{i=1}^n P(A_i)(1-P(A_i))
\leq \sum_{i=1}^n P(A_i)
=\mu_n
\end{multline*}
\[
\Var(X_n)=\Var\left(\frac{S_n}{\mu_n}\right)=\frac{\Var(S_n)}{\mu_n^2}
\leq \frac{1}{\mu_n}
\]
\end{proof}

On the other hand, $0\leq\Var(X_n)=E[X_n^2]-E[X_n]^2=E[X_n^2]-1$, so $E[X_n^2]\geq 1$. Putting this together with the upper bound,
\[
1\leq E[X_n^2]=1+\Var(X_n)\leq 1+\frac{1}{\mu_n}.
\]
Then we let $n\to\infty$, which makes $\mu_n\to\infty$, and therefore $E[X_n^2]\to 1$.

Now suppose that the covariance condition can fail for some pairs of events. Let $N$ be an positive integer such that $\Cov(I_i,I_j)\leq 0$ for all $i,j\geq N$. We will define events $A_1',A_2',\ldots$ as follows: $A_i'=\emptyset$ for $i<N$, and $A_i'=A_i$ for $i\geq N$. Define $I_i'$, $S_n'$, $\mu_n'$, $X_n'$ accordingly. This time $\Cov(I_i',I_j')\leq 0$ for all $i,j\geq 1$. Notice that
\begin{equation}\label{eq:primebounds}
S_n-N\leq S_n'\leq S_n,\quad\text{and}\quad \mu_n-N\leq\mu_n'\leq\mu_n,
\end{equation}
and hence $\mu_n'\to\infty$. Accordingly, we apply the argument above. We conclude
\[
\lim_{n\to\infty}E[(X_n')^2]=1.
\]
But $X_n$ and $X_n'$ are not so different:
\[
\left(\frac{S_n'}{\mu_n'+N}\right)^2
\leq X_n^2=\left(\frac{S_n}{\mu_n}\right)^2
\leq\left(\frac{S_n'+N}{\mu_n'}\right)^2
\]
To take expectations, we note
\[
E\left[\left(\frac{S_n'}{\mu_n'+N}\right)^2\right]=
E\left[\left(\frac{X_n'}{1+\frac{N}{\mu_n'}}\right)^2\right]=
\frac{1}{\left(1+\frac{N}{\mu_n'}\right)^2}
E\left[(X_n')^2\right],
\]
and
\[
E\left[\left(\frac{S_n'+N}{\mu_n'}\right)^2\right]=
E\left[\left(X_n'+\frac{N}{\mu_n'}\right)^2\right]=
E[(X_n')^2]+2\frac{N}{\mu_n'}E[X_n']+\frac{N^2}{(\mu_n')^2}.
\]
Using that $\mu_n'\to\infty$, and $E[(X_n')^2]\to 1$, we deduce that both of these expectations converge to $1$, and hence $E[X_n^2]\to 1$.

\emph{Remark: } We indeed proved a stronger result by showing that the limit of the sequence $E[X_n^2]$ is $1$, as opposed to just its $\liminf$.

\subsubsection{$\text{\NOP}\implies\text{\D}$}\

The following argument is along the line of that in \cite{Dur-PTAE}. We generalized it to prove our more general statement (with the ``eventually''); any other change is just in the exposition.

Just like in Section~\ref{sec:noptoer}, we will assume nonpositive correlation for all events first, then we modify the events to satisfy the stronger condition, and we show how the conclusion is achieved for the original events.

Following this plan, we assume $\Cov(I_i,I_j)\leq 0$ for all $i,j\geq 1$. The next step is to apply Chebyshev's inequality for the variable $S_n$. We use Lemma~\ref{lem:xnupper}. Let $\epsilon>0$.

\begin{equation}\label{eq:convprob}
P(|S_n-\mu_n|\geq \epsilon\mu_n)\leq\frac{\Var(S_n)}{\epsilon^2\mu_n^2}\leq\frac{\mu_n}{\epsilon^2\mu_n^2}=\frac{1}{\epsilon^2\mu_n}
\end{equation}
If we let $n\to\infty$, the right hand side converges to $0$, so we could use this inequality to prove $X_n\xrightarrow{p}1$. However, that is irrelevant for us, as we need almost sure convergence. The trick is to get a better bound on a subsequence first, use the (forward) Borel-Cantelli Lemma to get almost sure convergence of the subsequence, and make a deduction back to the sequence.

Let $n_k=\min\{n:\mu_n\geq k^2\}$. Let $T_k=S_{n_k}$ the mentioned subsequence, let $\nu_k=E[T_k]$, and note that $\nu_k=\mu_{n_k}$. Notice further that due to $n_k$ being a minimum,
\begin{equation}\label{eq:nukbounds}
k^2\leq \nu_k\leq k^2+1
\end{equation}

Use the inequality (\ref{eq:convprob}) for the subsequence $T_k$.
\[
P(|T_k-\nu_k|\geq \epsilon\nu_k)\leq\frac{1}{\epsilon^2\nu_k}\leq\frac{1}{\epsilon^2 k^2}
\]
Summing the inequalities for all $k$, we get
\[
\sum P(|X_{n_k}-1|\geq \epsilon)<\infty.
\]
The Borel-Cantelli Lemma now implies that a.s.\ only finitely many of the events $|X_{n_k}-1|\geq\epsilon$ occur, which exactly means that $X_{n_k}\xrightarrow{a.s.}1$. We just need to show $X_n\xrightarrow{a.s.}1$.

Let $n,k$ be such that $n_k\leq n\leq n_{k+1}$ so that $\mu_{n_k}\leq \mu_n\leq \mu_{n_{k+1}}$. The sequence $S_n$ is monotone increasing, so
\[
\frac{S_{n_k}}{\mu_{n_k}}\cdot\frac{\nu_k}{\nu_{k+1}}
\leq
\frac{S_{n_k}}{\mu_{n_k}}\cdot\frac{\mu_{n_k}}{\mu_n}
\leq
\frac{S_n}{\mu_n}
\leq
\frac{S_{n_{k+1}}}{\mu_{n_{k+1}}}\cdot\frac{\mu_{n_{k+1}}}{\mu_n}
\leq
\frac{S_{n_{k+1}}}{\mu_{n_{k+1}}}\cdot\frac{\nu_{k+1}}{\nu_k}.
\]
The leftmost side can be written as
\[
\frac{T_k}{\nu_k}\cdot\frac{\nu_k}{\nu_{k+1}}=X_{n_k}\cdot\frac{\nu_k}{\nu_{k+1}},
\]
and similarly, the rightmost side is
\[
X_{n_{k+1}}\cdot\frac{\nu_{k+1}}{\nu_k}.
\]
If we can prove $\nu_k/\nu_{k+1}\to 1$ (or equivalently, $\nu_{k+1}/\nu_k\to 1$), then taking limits in the inequality above as $k\to\infty$, we will conclude $X_n\xrightarrow{a.s.} 1$, as intended.

It remains to prove $\nu_k/\nu_{k+1}\to 1$. By the bounds (\ref{eq:nukbounds}),
\[
\frac{k^2}{(k+1)^2+1}\leq\frac{\nu_k}{\nu_{k+1}}\leq\frac{k^2+1}{(k+1)^2}.
\]
Clearly, both ends converge to $1$, which finishes our proof, at least in the case when the covariance condition is true for pairs of events.

For the balance, assume there exists a positive integer $N$ such that $\Cov(I_i,I_j)\leq 0$ whenever $i,j\geq N$. Define a sequence of events $A_1',A_2',\ldots$, just like in Section~\ref{sec:noptoer}: $A_i'=\emptyset$ for $i<N$, and $A_i'=A_i$ for $i\geq N$. Again, define $I_i'$, $S_n'$, $\mu_n'$, $X_n'$ accordingly. The bounds (\ref{eq:primebounds}) are still true, and by the argument above, $X_n'\xrightarrow{a.s.}1$. Therefore
\[
\frac{X_n'}{1+\frac{N}{\mu_n'}}=
\frac{S_n'}{\mu_n'+N}\leq
\frac{S_n}{\mu_n}\leq
\frac{S_n'+N}{\mu_n'}=
X_n'+\frac{N}{\mu_n'}.
\]

Both ends almost surely converges to $1$, so we conclude $X_n\xrightarrow{a.s.}1$, finishing the proof.

\subsubsection{$\text{\ER}\implies\text{\SUB}$}\ 

This is immediate from a simple lemma that is probably known. In fact stronger versions can be proven, but we only present a proof of what we need.

\begin{lemma}\label{lem:vartoconv}
If $\{X_n\}$ is a sequence of random variables such that $E[X_n]=\mu$ for all $n$, and $\Var(X_n)\to 0$ then $X_n\xrightarrow{p}\mu$.
\end{lemma}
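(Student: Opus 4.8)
The plan is simply to invoke Chebyshev's inequality; the two hypotheses --- a common mean $\mu$ and variances tending to $0$ --- are exactly what that inequality converts into a bound on tail probabilities. Concretely, I would fix $\epsilon>0$. Since $E[X_n]=\mu$ and $\Var(X_n)<\infty$ (implicit in the assumption $\Var(X_n)\to 0$), Chebyshev's inequality yields
\[
P(|X_n-\mu|\geq\epsilon)\leq\frac{\Var(X_n)}{\epsilon^2}.
\]
Letting $n\to\infty$ with $\epsilon$ fixed, the right-hand side tends to $0$ because $\Var(X_n)\to 0$. Hence $\lim_{n\to\infty}P(|X_n-\mu|\geq\epsilon)=0$ for every $\epsilon>0$, which is precisely the definition of $X_n\xrightarrow{p}\mu$.

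There is essentially no obstacle here: the argument is a one-line application of Chebyshev's inequality (the same tool used in (\ref{eq:convprob})), and the only point needing any care is that $\Var(X_n)$ must be finite, at least for all large $n$ --- but this is already encoded in the hypothesis $\Var(X_n)\to 0$, and the finitely many indices where it might fail are irrelevant to an asymptotic statement.

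For context, in the intended application to $\text{\ER}\implies\text{\SUB}$ one takes a subsequence $X_{n_k}$ along which $E[X_{n_k}^2]\to 1$ (such a subsequence exists because $\liminf_n E[X_n^2]=1$), notes that $\Var(X_{n_k})=E[X_{n_k}^2]-E[X_{n_k}]^2=E[X_{n_k}^2]-1\to 0$, and applies the lemma with $\mu=1$ to obtain $X_{n_k}\xrightarrow{p}1$; since convergence in probability to a constant is equivalent to convergence in distribution to that constant, this also delivers the parenthetical form of $\text{\SUB}$.
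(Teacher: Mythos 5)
Your proof is correct and uses essentially the same approach as the paper: a single application of Chebyshev's inequality, which you state in the direct form $P(|X_n-\mu|\geq\epsilon)\leq \Var(X_n)/\epsilon^2$ while the paper phrases it via the $k\sigma_n$ version with an $\epsilon$--$\delta$ bookkeeping; the difference is purely cosmetic.
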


\begin{proof}
Let $\epsilon,\delta>0$. Let $k$ be such that $1/k^2<\delta$, and let $N$ be such that for all $n\geq N$, $k\sigma_n<\epsilon$, where $\sigma_n=\sqrt{\Var(X_n)}$, the standard deviation of $X_n$. Then, by Chebyshev's inequality, for all $n\geq N$,
\[
P(|X_n-\mu|\geq\epsilon)\leq P(|X_n-\mu|\geq k\sigma_n)\leq\frac{1}{k^2}<\delta.
\]
\end{proof}

To prove the implication of this section, we suppose $\liminf E[X_n^2]=1$. Then there is a subsequence $X_{n_k}$ such that $E[X_{n_k}^2]\to 1$, or, equivalently, $\Var(X_{n_k})\to 0$. Then, applying Lemma~\ref{lem:vartoconv} for $X_{n_k}$, we conclude $X_{n_k}\xrightarrow{p}1$.

\subsubsection{$\text{\D}\implies\text{\SUB}$}\ 

Since almost sure convergence implies convergence in probability, and convergence of a sequence implies convergence of a subsequence, this is trivial.

\subsubsection{$\text{\SUB}\implies\text{\IO}$}\ 

Assume that $X_{n_k}$ is a subsequence with $X_{n_k}\xrightarrow{p}1$. Then there exists a sub-sequence $\snl$ of $\snk$ such that 
\begin{equation}\label{eq:4}
    P\left(\left|\frac{\snl}{E[\snl]}-1\right|>\frac{1}{2}\right)<2^{-l}.
\end{equation}
Notice that 
\begin{equation}\label{eq:5}
    \left\{\snl<\frac{1}{2}E[\snl]\right\}\subseteq \left\{|\snl - E[\snl]|>\frac{1}{2}E[\snl]\right\}=\left\{\left|\frac{\snl}{E[\snl]}-1\right|>\frac{1}{2}\right\}.
\end{equation}
Then from (\ref{eq:4}) and (\ref{eq:5}) we get

\[
    P(\snl<E[\snl]/2)<P\left(\left|\frac{\snl}{E[\snl]}-1\right|>1/2\right)<2^{-l}.
\]
Taking sums on both sides gives us that
\[
    \sum_{l=1}^{\infty}P(\snl<E[\snl]/2)<\infty .
\]
By the Borel-Cantelli lemma 
\[
    P(\snl<E[\snl]/2 \io) = 0.    
\]
Hence $\snl\geq E[\snl]/2$ for all but finitely many $l$ almost surely. Since we assumed that $E[S_n]\to\infty$, $\snl\to\infty$ almost surely. Thus $P(A_n \io)=1.$

\subsection{Negative results}

\subsubsection{$\text{\PWI}\centernot\implies\text{\IND}$} Trivial.

\subsubsection{$\text{\NOP}\centernot\implies\text{\PWI}$} Trivial.

\subsubsection{$\text{\ER}\centernot\implies\text{\D}$}\ 

Consider the probability space $((0,1], B, \lambda)$ where $B$ is the Borel $\sigma$-field and $\lambda$ is Lebesgue measure. Now consider a sequence $A_1, A_2,...$ consisting of the two events $(0,1]$ and $(0,\tfrac{1}{2}]$. The sequence will consist of alternating runs of $(0,1]$'s and $(0,\tfrac{1}{2}]$'s as such
\[
(0,1],..., (0,1], (0,\tfrac{1}{2}],..., (0,\tfrac{1}{2}], (0,1],..., (0,1], (0,\tfrac{1}{2}],..., (0,\tfrac{1}{2}],..
\]
Let $a_i$ be the length of the $i$th run of $(0,1]$'s. Then the $i$th run of $(0,\tfrac{1}{2}]$'s will also be of length $a_i$.

Then $\mu_n=\frac{3}{2}(a_1+\cdots+a_{i-1})+a_i$. So $S_n$ is either $2(a_1+\cdots+a_{i-1})+a_i$, or $a_1+\cdots+a_{i-1}+a_i$. Therefore
\[
E[X_n^2]=\frac{
\left(\frac{2(a_1+\cdots+a_{i-1})+a_i}{\frac{3}{2}(a_1+\cdots+a_{i-1})+a_i}\right)^2+
\left(\frac{(a_1+\cdots+a_{i-1})+a_i}{\frac{3}{2}(a_1+\cdots+a_{i-1})+a_i}\right)^2}
{2}
\]
As $a_i\to\infty$, we have $E[X_n^2]\to 1$. So it is possible to choose $a_i$ to be the length needed so that if $A_n$ is the last $(0,1]$ term of the $i$th run, then $E[X_n^2]-1<2^{-n}$. Thus \ER\ is satisfied.

To see that \D\ is not satisfied, notice that if $A_{i_1}, A_{i_2},...$ are the last terms of the $(0,\tfrac{1}{2}]$ runs, then for $j=1,2,... $, $X_{i_j}=\tfrac{4}{3}$ when $\omega\in (0,\tfrac{1}{2}]$ and $X_{i_j}=\tfrac{2}{3}$ when $\omega\in (\tfrac{1}{2},1]$.

\subsubsection{$\text{\D}\centernot\implies\text{\ER}$}\

We start by fixing a function whose domain is a set of pairs of nonnegative integers $(n,k)$ with $0\leq k\leq n$, and codomain is the interval $[0,1]$. We will use the notation $\galton{n}{k}$ for the value of this function at $(n,k)$, and we will refer to the values as Galton coefficients. Our state space $\Omega$ will be the set of infinite $0$--$1$ sequences, and the event $A_i$ is that the $i$th digit is $1$. We will start indexing the digits and the events with $i=0$. The probability measure will be based on the Galton coefficients. We will refer to these kind of sequences of events in these probability spaces as Galton sequences\footnote{We named these sequences after Sir Francis Galton, inventor of the Galton board, which is a physical device used to illustrate the binomial distribution. In case $\galton nk=p$ for all $n,k$, the distribution of $S_n$ is binomial.}. After making some general observations about Galton sequences, we will show how to set the Galton coefficients to generate a Galton sequence for which \D\ holds, but \ER\ does not. The reason we start indexing the events with $0$ as opposed to $1$ is convenience of notation; we adjust the other notations accordingly. This, of course, does not change behavior in the limit.

After fixing Galton coefficients, here is how to define the Galton sequence. We will describe how to recursively sample the events.
\begin{itemize}
\item $A_0$ occurs with probability $\galton00$.
\item If $A_0$ did not occur, then $A_1$ occurs with probability $\galton01$, otherwise it occurs with probability $\galton11$.
\item In general, suppose that $A_0,A_1,\ldots,A_{i-1}$ is determined. Then $A_i$ occurs with probability $\galton i k$, where $k$ is the number of events occurred among $\{A_0,\ldots,A_{i-1}\}$.
\end{itemize}

The definitions of $S_n$ must be modified to count events among $\{A_0,\ldots,A_n\}$, but all other notations are essentially unchanged. It is still clear that $E[X_n]=1$ for all $n\geq 0$.

In the next step, we will show how to define Galton coefficients such that $\mu_n\to\infty$, $X_n\xrightarrow{a.s}1$, but $E[X_n^2]\to\infty$, providing the required example. In the remaining of this section, we use $\log$ to denote logarithm of base $2$.

\[
\galton{n}{k}=\begin{cases}
1\text{ if $n+1$ is a power of $2$, and $k=\lfloor\log n\rfloor$;}\\
\frac13\text{ if $n$ is a power of $2$, and $k=n$;}\\
1\text{ if $n$ is not a power of $2$, and $k=n$;}\\
0\text{ otherwise.}
\end{cases}
\]
(See Figure~\ref{fig:galton}.)

\begin{center}
\begin{figure}
\includegraphics[scale=0.5]{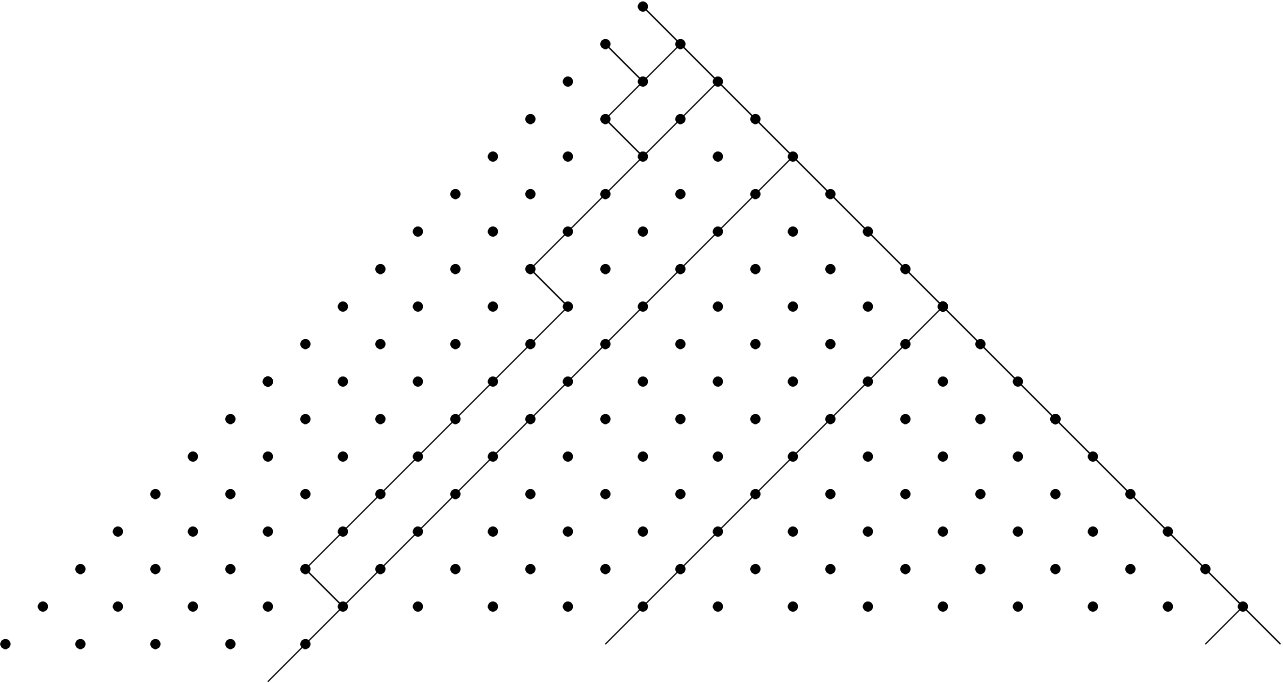}
\caption{Illustration of Galton coefficients. The lines denote the possible paths of the balls in a physical Galton board. The leftmost broken line describes a logarithmic ``curve''. Any time lines branch, the probabilities of a ball going right is $1/3$.}\label{fig:galton}
\end{figure}
\end{center}

The first goal is to determine the distribution of $S_n$. This is used to to compute $\mu_n$, and $E[X_n^2]$. The probability mass function (p.m.f.) $p_n(i)$ of $S_n$ can be used (with the Galton coefficients) to determine the p.m.f.\ $p_{n+1}(i)$ of $S_{n+1}$. Indeed, 
\[
p_{n+1}(i)=p_n(i-1)\galton{n}{i-1}+p_n(i)\left(1-\galton{n}{i}\right).
\]
Now let $n\geq 2$, $k=\lfloor\log n\rfloor$, $m=\lceil\log n\rceil$, and let $l$ be the least power of $2$ that is greater than $k$.
One can use induction to show
\[
p_n(i)=\begin{cases}
1-1/3^{\log l}\text{ if }i=k\\
2/3^{1+\log l}\text{ if }i=l\\
2/3^{2+\log l}\text{ if }i=2l\\
\vdots\\
2/3^{m}\text{ if }i=2^{m-\log l-1}l=2^{\lceil\log n\rceil-1}\\
1/3^{m}\text{ if }i=n\\
0\text{ for all other values of $i$}.
\end{cases}
\]

Right away it follows that $\mu_n\geq k$, and so $\mu_n\to\infty$. On the other hand,
\[
\mu_n=\left(1-\frac{1}{3^{\log l}}\right)k+
\sum_{i=1}^{m-\log l}\frac{2}{3^{i+\log l}}2^{i-1}l+
\frac{1}{3^{m}}n.
\]
The middle summation term is the sum of a geometric sequence with first term $(2l)/3^{1+\log l}$, and quotient $2/3$. This means that regardless of the number of terms, it is upper bounded by $(2l)/3^{\log l}$. Using this, and other simple inequalities,
\begin{equation}\label{eq:expbound}
\mu_n\leq k+\frac{2l}{3^{\log l}}+\frac{n}{3^m}\leq k+1,
\end{equation}
if $n$ is large enough, because both the second and the third term converges to zero.

The first goal is to show $X_n\xrightarrow{a.s.}1$. In this part of the proof, we write $k=k(n)=\lfloor\log n\rfloor$ to emphasize the dependence of $k$ on $n$. Let $\omega\in\Omega$ be a $0$--$1$ sequence.

From the definition it is immediate that
\begin{itemize}
\item if $S_n(\omega)=k(n)$, then $S_i(\omega)=k(i)$ for all $i\geq n$;
\item if $S_n(\omega)<n$ then there exists $N$ (which depends on $\omega$) such that $S_N(\omega)=k(N)$, and therefore $S_i(\omega)=k(i)$ for all $i\geq N$.
\end{itemize}

Let $K=\{\omega\in\Omega: S_n(\omega)=k(n)\text{ for some $n$}\}$. Then $\overline{K}=\{\omega\in\Omega: S_n(\omega)=n\text{ for all $n$}\}$. Since $P(\overline{K})=(1/3)(1/3)\cdots=0$, we get $P(K)=1$. In other words, $P(S_n=k(n) \text{ eventually})=1$. Therefore
\[
\frac{k(n)}{k(n)+1}\leq\frac{S_n}{\mu_n}\leq\frac{k(n)}{k(n)}=1
\]
for large enough $n$, a.s. We conclude $X_n\xrightarrow{a.s.}1$.

It remains to be shown that the second moment of $X_n$ converges to infinity. For large $n$, using the p.m.f.\ of $S_n$ and (\ref{eq:expbound}),
\[
E[X_n^2]=\frac{E[S_n^2]}{\mu_n^2}\geq
\frac{\left(1-\frac{1}{3^{\log l}}\right)k^2+\frac{1}{3^{m}}n^2}{(k+1)^2}=
\frac{k^2}{(k+1)^2}-\frac{k^2}{3^{\log l}(k+1)^2}+\frac{n^2}{3^m (k+1)^2}.
\]
The first term converges to $1$, and the second one converges to $0$. So it comes down to the third term.
\[
\frac{n^2}{3^m (k+1)^2}\geq
\frac{n^2}{3^{1+\log n} (\log n+1)^2}=
\frac13\frac{n^{2-\log 3}}{(\log n+1)^2}\to\infty.
\]

\emph{Remark 1.} The technical details might hide the fact just how delicate was the process of finding this counterexample. Even after coming up with the general idea of what kind of p.m.f.\ $S_n$ needs to have, and how to construct it from events with Galton sequences, it required a very careful balancing act to make sure $p_n(n)\to 0$ just the right way. If it converges too fast, we get $\liminf E[X_n^2]=1$, and if it converges too slowly, then $\mu_n$ would get greater than $\log n$, enough to make $X_n$ converge to a number less than $1$. There is a fairly narrow band in which the rate of convergence is just right. Furthermore, a similar balancing act is necessary for $\mu_n$, which, if converges to infinity too fast, would make $\liminf E[X_n^2]=1$.

\emph{Remark 2.} Somewhat interestingly, we found that in this example $E[X_n^2]$ is \emph{not} monotone. We found a very nice proof that $E[X_n^2]\geq E[X_{n-1}^2]$, \emph{except when $n$ is a power of $2$}. This lemma would make the argument proving $E[X_n^2]\to\infty$ simpler, because we could restrict our attention to values of $n$ that are powers of $2$; however one would have to include the proof of the lemma, thereby losing the simplicity.

\subsubsection{$\text{\IO}\centernot\implies\text{\SUB}$}\ 

Consider the probability space $((0,1],B,\lambda)$ where $B$ is the Borel $\sigma$-field and $\lambda$ is Lebesgue measure. Now consider an alternating sequence of events $A_1, A_2, A_3,\ldots$ as such
\[
(0,1], (0,\tfrac{1}{2}], (0,1], (0,\tfrac{1}{2}], \ldots 
\]
Clearly $P(A_n \io)=1$. 

Since $X_n(\omega)$ converges to $4/3$ and $2/3$ for $\omega\in(0,\tfrac{1}{2}]$ and for $\omega\in(\tfrac{1}{2},1]$ respectively, we may choose $\epsilon =1/4$ so that 
\[
\lim_{n\to\infty} P(|X_n-1|>\epsilon)=1.
\]
Thus no subsequence $X_{n_k}$ of $X_n$ will converge in probability to $1$.

\bibliographystyle{plain}
\bibliography{bib}

\begin{thebibliography}{10}

\bibitem{Bir-Dar-18}
Csaba Bir{\'o} and Udayan~B. Darji.
\newblock Generating infinite random graphs.
\newblock {\em Proc. Edinb. Math. Soc. (2)}, 61(3):847--868, 2018.

\bibitem{Bru-80}
F.~Thomas Bruss.
\newblock A counterpart of the {B}orel-{C}antelli lemma.
\newblock {\em J. Appl. Probab.}, 17(4):1094--1101, 1980.

\bibitem{Cha-TBCL}
Tapas~Kumar Chandra.
\newblock {\em The {B}orel-{C}antelli lemma}.
\newblock SpringerBriefs in Statistics. Springer, Heidelberg, 2012.

\bibitem{Dur-PTAE}
Rick Durrett.
\newblock {\em Probability: Theory and Examples}, volume~49 of {\em Cambridge
  Series in Statistical and Probabilistic Mathematics}.
\newblock Cambridge University Press, fifth edition, 2019.

\bibitem{Erd-Ren-59}
Paul Erd{\H o}s and Alfr{\'e}d R{\'e}nyi.
\newblock On {C}antor's series with convergent {$\sum 1/q_{n}$}.
\newblock {\em Ann. Univ. Sci. Budapest. E\"{o}tv\"{o}s Sect. Math.},
  2:93--109, 1959.

\bibitem{Fro-12}
Andrei~N. Frolov.
\newblock Bounds for probabilities of unions of events and the
  {B}orel-{C}antelli lemma.
\newblock {\em Statist. Probab. Lett.}, 82(12):2189--2197, 2012.

\bibitem{Koc-Sto-64}
Simon Kochen and Charles Stone.
\newblock A note on the {B}orel-{C}antelli lemma.
\newblock {\em Illinois J. Math.}, 8:248--251, 1964.

\bibitem{Lam-63}
John Lamperti.
\newblock Wiener's test and {M}arkov chains.
\newblock {\em J. Math. Anal. Appl.}, 6:58--66, 1963.

\bibitem{Mor-Sze-83}
Tam{\'a}s~F. M{\'o}ri and G{\'a}bor~J. Sz{\'e}kely.
\newblock On the {E}rd{\H o}s-{R}\'{e}nyi generalization of the
  {B}orel-{C}antelli lemma.
\newblock {\em Studia Sci. Math. Hungar.}, 18(2-4):173--182, 1983.

\bibitem{Spi-PORW}
Frank Spitzer.
\newblock {\em Principles of Random Walk}.
\newblock Princeton, N.J., Van Nostrand, 1964.

\end{thebibliography}

\end{document}